\documentclass[11pt]{article}%
\usepackage{mathrsfs}
\usepackage{bbm}
\usepackage{amsfonts}
\usepackage{amsmath,amssymb}
\usepackage{amsmath}
\usepackage{amssymb}
\usepackage{graphicx}%
\usepackage{shorttoc}
\providecommand{\U}[1]{\protect \rule{.1in}{.1in}}

\newtheorem{theorem}{Theorem}[section]

\newtheorem{lemma}[theorem]{Lemma}
\newtheorem{proposition}[theorem]{Proposition}
\newtheorem{remark}[theorem]{Remark}

\newenvironment{proof}[1][Proof]{\noindent \textbf{#1.} }{\  $\Box$}
\numberwithin{equation}{section}

\newcommand{\E}{\widehat{\mathbb E}}
\renewcommand{\NG}{\mathcal N_G}
\newcommand{\Lip}{\operatorname{Lip}}
\newcommand{\erf}{\operatorname{erf}}
\newcommand{\R}{\mathbb R}
\newcommand{\norm}[1]{\left\|#1\right\|}

\begin{document}

\title{\textbf{A Berry--Esseen Bound of Order \((1+\log n)/\sqrt n\) for Peng's Central Limit Theorem
		 }}
\author{Yongsheng Song\thanks{State Key Laboratory of Mathematical Sciences, Academy of Mathematics and Systems Science, Chinese Academy of Sciences, Beijing 100190, China, and School of Mathematical Sciences, University of Chinese Academy of Sciences, Beijing 100049, China. E-mails: yssong@amss.ac.cn.}  }

\date{\today}
\maketitle

\begin{abstract}We derive a quantitative central limit theorem under sublinear expectations from a spatial endpoint regularity estimate for the one-dimensional
\(G\)-heat equation. For bounded Lipschitz initial data, the second spatial	derivative and the time derivative of its solution are Lipschitz in space and \(1/2\)-H\"older in time away from the initial time. In particular, \([u_{xx}(t,\cdot)]_{\mathrm{Lip}}\le C_G\Lip(\phi)/t\). Combining this estimate with the nonlinear Stein identity introduced in Song \cite{Song2020} and a replacement argument yields a bound of order \((1+\log n)/\sqrt n\) for bounded Lipschitz tests when the third absolute moment is finite and the lower variance is positive. The final replacement step is estimated directly, avoiding the nonintegrable singularity at time zero.

\end{abstract}

\textbf{Keywords}: Stein's method; central limit theorem; sublinear expectation; $G$-normal distribution; $G$-heat equation

\textbf{MSC classification}: 60F05; 60G50

\section{Introduction and main result}
Peng's central limit theorem \cite{Peng2008} identifies the \(G\)-normal
distribution as the limit of normalized sums of independent and
identically distributed random variables under a sublinear expectation.
Song \cite{Song2020} developed a nonlinear Stein method and obtained a
rate \(n^{-\gamma/2}\), where \(\gamma\in(0,1)\) is a spatial H\"older
exponent for the second derivative of the \(G\)-heat semigroup.
Krylov \cite[Theorem 1.1]{Krylov2020} obtained quantitative estimates
in a classical stochastic control formulation, allowing the lower
variance to be zero. For Lipschitz test functions and a finite third
absolute moment, his theorem gives a rate \(n^{-1/6}\).
Thus its scope includes the degenerate case, whereas the endpoint
regularity argument below requires a strictly positive lower variance.
The present argument uses the one-dimensional structure to reach
the spatial Lipschitz endpoint. Its time singularity is \(t^{-1}\),
which leads to a logarithmic loss in the replacement bound.

Let \((\Omega,\mathcal H,\E)\) be a sublinear expectation space:
\(\E\) is monotone, constant preserving, subadditive and positively
homogeneous. Independence is understood in Peng's directional sense:
if \(Y\) is independent of \(X\), then
\[
\E[f(X,Y)]=\E\bigl[\E[f(x,Y)]_{x=X}\bigr]
\]
for the admissible test functions. The ``directional sense" means that the fact that $Y$ is independent of $X$ does not imply that $X$ is independent of $Y$. We assume that \(X_{i+1}\) is
independent of \((X_1,\ldots,X_i)\) and all \(X_i\) have the same law.
As in \cite{Song2020}, the one-variable law is represented on bounded
Lipschitz functions by a weakly compact family \(\Theta\) of probability
measures, with its moment extension:
\[
N[f]=\E[f(X_1)]=\sup_{\mu\in\Theta}\int f\,d\mu.\]
We assume the relevant moment functions belong to the expectation domain
and this representation is valid for them. These are the law and
integrability hypotheses used in the nonlinear Stein identity below.

Suppose
\begin{equation}\label{eq:assumptions}
	\E[X_1]=\E[-X_1]=0,\qquad
	0<\alpha:=-\E[-X_1^2]\le\beta:=\E[X_1^2],\qquad
	m_3:=\E[|X_1|^3]<\infty.
\end{equation}
Define
\[
G(r)=\tfrac12(\beta r^+-\alpha r^-),\qquad
W_n=n^{-1/2}\sum_{i=1}^nX_i .
\]
For \(\phi\in C_{b,\mathrm{Lip}}(\R)\), let \(P_t^G\phi(x)=u(t,x)\),
where \(u\) is the unique bounded viscosity solution of
\begin{equation}\label{eq:pde}
	u_t-G(u_{xx})=0,\qquad u(0,x)=\phi(x).
\end{equation}
The \(G\)-normal expectation is \(\NG[\phi]=P_1^G\phi(0)\).
For a function \(f\), write
\[[f]_{\mathrm{Lip}}=\sup_{x\ne y}|f(x)-f(y)|/|x-y|.\]

\begin{theorem}[Main convergence bound]\label{thm:main}
	Under \eqref{eq:assumptions}, for every bounded Lipschitz function
	\(\phi\), every integer \(n\ge1\), and
	\[
	A_G=\frac{8\beta}{\pi\alpha^2},
	\]
	one has
	\begin{equation}\label{eq:main}
		\left|\E[\phi(W_n)]-\NG[\phi]\right|
		\le \frac{\Lip(\phi)}{\sqrt n}
		\left(2A_Gm_3\log n+2\sqrt\beta\right).
	\end{equation}
	In particular, uniformly over bounded tests with Lipschitz constant at
	most one, the error is \(O((1+\log n)/\sqrt n)\).
\end{theorem}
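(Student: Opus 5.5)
We plan to run a Lindeberg-type replacement argument along the $G$-heat semigroup, using the regularity estimate announced in the abstract: for bounded Lipschitz $\phi$ and $u=P^G_t\phi$, one has $[u_{xx}(t,\cdot)]_{\mathrm{Lip}}\le C_G\Lip(\phi)/t$ with $C_G$ proportional to $\beta/\alpha^2$. The main term $A_G$ will come out of this constant.

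Set $t_k=k/n$ and $S_k=n^{-1/2}\sum_{i\le k}X_i$, and define $V(t,x)=P^G_{1-t}\phi(x)$. Then $\E[\phi(W_n)]=\E[V(1,S_n)]$ and $\NG[\phi]=V(0,0)$. Telescoping with the independence structure gives
\[
\E[\phi(W_n)]-\NG[\phi]\le\sum_{k=0}^{n-1}\sup_x\Bigl|\E\bigl[V(t_{k+1},x+n^{-1/2}X_1)\bigr]-V(t_k,x)\Bigr|.
\]
The analogous lower bound follows by subadditivity applied to $-\phi$ in the standard way. Independence lets us condition on $S_k=x$, so we only need to control one-step errors $e_k$ uniformly in $x$.

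For $k\le n-2$, write $s=1-t_{k+1}\ge 1/n$ and $h=1/n$, and use the semigroup identity $V(t_k,x)=P^G_h V(t_{k+1},\cdot)(x)$. Taylor-expand $v=V(t_{k+1},\cdot)$ to second order around $x$. The first-order terms vanish because $\E[\pm X_1]=0$. The quadratic term gives
\[
\E\bigl[\tfrac{1}{2n}v''(x)X_1^2\bigr]=\tfrac1n G(v''(x)).
\]
The remainder is bounded by
\[
\tfrac{1}{2}[v'']_{\mathrm{Lip}}\,n^{-3/2}\,\E|X_1|^3/3\le C\Lip(\phi)\,m_3\,n^{-3/2}/s.
\]
On the other side, $P^G_hv(x)-v(x)-hG(v''(x))$ is controlled by writing it as $\int_0^h\bigl(G(u_{xx}(s+r,x))-G(u_{xx}(s,x))\bigr)\,dr$. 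Here we use the $1/2$-Hölder-in-time regularity of $u_{xx}$ away from $0$, or equivalently compare via the $G$-normal Stein identity from \cite{Song2020}. This again yields a bound of order $\Lip(\phi)h^{3/2}/s$.

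Summing over $s=j/n$ for $j=1,\dots,n-1$ gives
\[
\sum_j \frac{n^{-3/2}}{j/n}=n^{-1/2}\sum_j \frac1j\le n^{-1/2}(1+\log n).
\]
Tracking constants should produce the term $2A_Gm_3\log n/\sqrt n$, with the harmonic sum's leading "1" absorbed elsewhere.

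The final step $k=n-1$, where $s=0$ and $v=\phi$, is not smooth, and the estimate $1/s$ is not integrable there. Following the abstract, we estimate it directly and crudely. We bound
\[
|\E[\phi(x+n^{-1/2}X_1)]-\phi(x)|\le\Lip(\phi)\,n^{-1/2}\E|X_1|\le\Lip(\phi)\sqrt{\beta/n}
\]
and
\[
|P^G_{1/n}\phi(x)-\phi(x)|\le\Lip(\phi)\,\NG[|\cdot|]\,n^{-1/2}\le\Lip(\phi)\sqrt{\beta/n}.
\]
Together these give the $2\sqrt\beta$ term.

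The main obstacle is the sharp constant bookkeeping in the smooth steps. We must make sure the cubic remainder and the semigroup consistency error combine into exactly $A_Gm_3/(j)$ per step, and that the $j=n-1$ endpoint of the harmonic sum is absorbed without an extra constant. If direct Taylor bookkeeping does not close, the fallback is to use the nonlinear Stein identity of \cite{Song2020}. It expresses each step error through $\E[\,\cdot\,]$ of $v''(x+\theta n^{-1/2}X_1)-v''(x)$ weighted by $X_1^2$, which gives the factor $m_3[v'']_{\mathrm{Lip}}$ cleanly.
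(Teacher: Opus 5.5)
Your decomposition is the paper's: the same Lindeberg chain $a_i=\E[u(1-i/n,S_i)]$, the same reduction of each step to $\sup_x|b_i(x)-c_i(x)|$, and the same direct treatment of the last step, which produces $2\sqrt\beta$. You differ in how you bound a smooth step with base time $s=j/n\ge 1/n$.

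\textbf{The paper's route.} It never Taylor-expands. It applies Song's nonlinear Stein identity (Lemma~\ref{lem:identity}) to $F(y)=u(s,x+y/\sqrt n)$, whose interpolation is $F_\sigma(y)=u(s+\sigma/n,\,x+\sqrt{(1-\sigma)/n}\,y)$ for $\sigma\in[0,1]$. The defect bound of Lemma~\ref{lem:defect} then makes the step error at most $\frac{2m_3}{\sqrt n}\int_s^{s+1/n}[u_{xx}(t)]_{\mathrm{Lip}}\,dt$. Summing gives exactly $\frac{2m_3}{\sqrt n}\int_{1/n}^1 A_GL\,t^{-1}\,dt$. This uses only the spatial estimate \eqref{eq:qlip} and incurs no harmonic-sum slack.

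\textbf{Your route.} You compare both $N$ and $P^G_{1/n}$ with $v+\frac1nG(v'')$. This is more elementary: sublinearity, $\E[\pm X_1]=0$ and the moment assumptions suffice, with no appeal to $\Theta$ or maximizing measures. The cost is that it uses the time-H\"older estimate \eqref{eq:wtime}, applied to $u_t=G(u_{xx})$, in addition to \eqref{eq:qlip}.

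\textbf{The constants do close.} The bookkeeping you flagged as the main obstacle works out as follows.
\begin{itemize}
\item The Taylor remainder is at most $[v'']_{\mathrm{Lip}}m_3/(6n^{3/2})\le A_GLm_3/(6j\sqrt n)$.
\item By \eqref{eq:wtime} with $\tau=s$, the consistency error is at most $\frac{4\beta^{3/2}L}{\pi\alpha s}\cdot\frac23 n^{-3/2}=\frac{8\beta^{3/2}L}{3\pi\alpha\, j\sqrt n}\le\frac{A_GLm_3}{3j\sqrt n}$.
\item That last inequality needs $\alpha\le\beta$ and $\beta^{3/2}\le m_3$ (H\"older's inequality under $\E$). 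You should state this explicitly. Without it, this term carries $\beta^{3/2}$ rather than $m_3$ and does not fit the form of \eqref{eq:main}.
\item The smooth steps then total $\frac{A_GLm_3}{2\sqrt n}\sum_{j=1}^{n-1}\frac1j\le\frac{A_GLm_3}{2\sqrt n}(1+\log n)\le\frac{2A_GLm_3\log n}{\sqrt n}$ for $n\ge2$.
\end{itemize}
So the leading $1$ of the harmonic sum is absorbed with room to spare, and your route gives an asymptotically smaller constant than the paper's.

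\textbf{A minor correction.} The lower bound does not follow by applying the argument to $-\phi$, because in general $\E[-\phi(W_n)]\ne-\E[\phi(W_n)]$. It follows directly from $|\E[f]-\E[g]|\le\E[|f-g|]$ at each replacement step, which already gives the two-sided estimate.
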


The test class in this theorem measures a Lipschitz discrepancy between
sublinear laws; it is not a Kolmogorov distance for indicator tests.
The variance parameters \(\alpha,\beta\) are not H\"older exponents.
No continuous third spatial derivative of \(u\) is asserted.

\section{Endpoint regularity of the \(G\)-heat equation}

\subsection{A one-dimensional linear smoothing estimate}
\begin{lemma}\label{lem:smoothing}
	Let \(z\in C^{1,2}((s,+\infty)\times\mathbb{R})\cap C_b([s,+\infty)\times\mathbb{R})\) be a bounded classical solution of
	\[
	z_t=a(t,x)z_{xx},\qquad 0<\lambda\le a(t,x)\le\Lambda.
	\]
	
Then for \(t>s\),
	\begin{equation}\label{eq:lineargradient}
		\norm{z_x(t,\cdot)}_\infty
		\le \frac{2\norm{z(s,\cdot)}_\infty}
		{\sqrt{2\pi\lambda(t-s)}}.
	\end{equation}
	The constant does not involve derivatives of \(a\).
\end{lemma}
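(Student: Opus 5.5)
The plan is to compare the two-point difference of \(z\) with a one-dimensional barrier, a doubling-of-variables (reflection) argument that uses the structure of \(\R\). Set \(M=2\norm{z(s,\cdot)}_\infty\) and \(\tau=t-s\). For \(t>s\) and \(x>y\) consider
\[
w(t,x,y)=z(t,x)-z(t,y),\qquad \Psi(\tau,r)=M\,\erf\Bigl(\frac{r}{\sqrt{8\lambda\tau}}\Bigr),\quad r=x-y>0 .
\]
The function \(\Psi\) solves \(\Psi_\tau=2\lambda\Psi_{rr}\) with initial data \(M\,\mathrm{sign}(r)\). For \(r>0\) it is increasing and concave, so \(\Psi_{rr}\le0\). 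We have \(\partial_r\Psi(\tau,0)=M\cdot\frac{2}{\sqrt\pi}\cdot\frac1{\sqrt{8\lambda\tau}}=\frac{M}{\sqrt{2\pi\lambda\tau}}\), which is exactly the right-hand side of \eqref{eq:lineargradient}. So the whole statement reduces to the comparison \(w\le\Psi(t-s,x-y)\) on \(\{x>y\}\). Once this holds, dividing by \(x-y\) and letting \(y\uparrow x\) gives \(z_x(t,x)\le M/\sqrt{2\pi\lambda\tau}\). Applying the same bound to \(-z\), which solves the same equation, gives the two-sided estimate.

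\textbf{Step 1: supersolution property.} Write \(\Phi(t,x,y)=\Psi(t-s,x-y)\). In the doubled variables, \(w\) satisfies \(w_t=a(t,x)w_{xx}+a(t,y)w_{yy}\). For \(\Phi\) we have \(\Phi_{xx}=\Phi_{yy}=\Psi_{rr}\le0\), and therefore
\[
\Phi_t-a(t,x)\Phi_{xx}-a(t,y)\Phi_{yy}=\bigl(2\lambda-a(t,x)-a(t,y)\bigr)\Psi_{rr}\ge0 ,
\]
because \(a(t,x)+a(t,y)\ge2\lambda\) and \(\Psi_{rr}\le0\). Only the lower bound \(\lambda\) enters here, and no derivative of \(a\) appears; this accounts for the final remark in the lemma.

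\textbf{Step 2: boundary behaviour.} On the diagonal \(x=y\) we have \(w=0=\Phi\). For any fixed \(r>0\), \(w\le M\) always and \(\Phi\to M\) as \(t\downarrow s\). The difference \(w-\Phi\) therefore has nonpositive limit superior at \(t=s\), except near the corner where \(r\) and \(\tau\) are both small. There we use that \(z\in C_b([s,\infty)\times\R)\) is uniformly continuous on compact sets. This gives \(|w|\le\eta\) when \(|x-y|\le\delta(\eta)\) and \(t-s\le\delta(\eta)\) on a compact region.

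\textbf{Step 3: maximum principle on the unbounded domain.} I would argue by contradiction with the penalized function
\[
w-\Phi-\varepsilon\bigl(x^2+y^2+4\Lambda(t-s)\bigr)-\eta-\varepsilon (t-s)
\]
on \([s,T]\times\{x\ge y\}\). Boundedness of \(z\) and of \(\Phi\) forces a positive supremum to be attained in a compact set. Step 2 excludes the diagonal and the initial layer, up to the slack \(\eta\). At an interior maximum the first- and second-order conditions, combined with Step 1, yield \(0\le -\varepsilon<0\), a contradiction. Letting \(\varepsilon,\eta\to0\) gives \(w\le\Phi\).

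I expect the main obstacle to be Step 3. The singularity of \(\Psi\) at \(\tau=0\) means the initial comparison holds only in a limiting sense near the corner \(r=0,\ \tau=0\). The construction must therefore be arranged so that the approach to the corner is controlled by the continuity of \(z\) up to \(t=s\). If this proves delicate, I would instead start the comparison at \(s+\delta\) with a slightly enlarged constant \(M+\eta\) and send \(\delta,\eta\to0\).
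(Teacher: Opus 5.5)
Your proposal is correct and is essentially the paper's proof. It uses the same barrier $\Psi$ (your $M=2K$, and $\sqrt{8\lambda\tau}=2\sqrt{2\lambda h}$), the same doubled-variable supersolution inequality based on $a(t,x)+a(t,y)\ge 2\lambda$ and $\Psi_{rr}\le 0$, and the same penalized maximum-principle contradiction, followed by $y\uparrow x$ and the symmetric argument for $-z$.

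The differences are cosmetic. You penalize with $\varepsilon(x^2+y^2)$, hence $4\Lambda$ rather than $2\Lambda$ in the time penalty, where the paper uses $\eta\bigl(\sqrt{1+x^2}+\sqrt{1+y^2}\bigr)$. You handle the corner $\tau=r=0$ with an additive slack and uniform continuity of $z$ on compacts, whereas the paper extends the two-point function to $h=0$ and uses its upper semicontinuity there; both rest on the continuity of $z$ up to $t=s$.
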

\begin{proof}
	Set \(K=\norm{z(s,\cdot)}_\infty\). The assertion is immediate if \(K=0\).
	For \(h,d>0\), define
	\[
	\Psi(h,d)=2K\erf\left(\frac{d}{2\sqrt{2\lambda h}}\right),
	\qquad
	\erf(r)=\frac2{\sqrt\pi}\int_0^r e^{-q^2}\,dq.
	\]
	Direct differentiation gives
	\[
	\Psi_h=2\lambda\Psi_{dd},\quad \Psi_{dd}\le0,\quad
	\Psi(h,0)=0,\quad \lim_{h\downarrow0}\Psi(h,d)=2K\quad(d>0).
	\]
	We claim that for \(x<y\),
	\begin{equation}\label{eq:twopoint}
		z(s+h,y)-z(s+h,x)\le\Psi(h,y-x).
	\end{equation}
	
	\iffalse
	At a spatial maximum $(h_0, x_0, y_0)$ of the difference of the two sides \[F(h, x, y):=z(s+h,y)-z(s+h,x)-\Psi(h,y-x),\] $F_{yy}(h_0,x_0,y_0)\le0$, $F_{xx}(h_0,x_0,y_0)\le0$  imply
	\[
	z_{xx}(s+h_0,y_0)\le\Psi_{dd}(h_0, y_0-x_0),\qquad
	z_{xx}(s+h_0,x_0)\ge-\Psi_{dd}(h_0, y_0-x_0).
	\]
	Therefore its time derivative 
	\begin{align*}
		&F_h(h_0,x_0,y_0)\\
		=& z_t(s+h_0,y_0)-z_t(s+h_0,x_0)-\Psi_h(h_0,y_0-x_0)\\
		=&a(s+h_0,y_0)z_{xx}(s+h_0,y_0)-a(s+h_0,x_0)z_{xx}(s+h_0,x_0)-2\lambda\Psi_{dd}(h_0,y_0-x_0)\\
		\le& (a(s+h_0,y_0)+a(s+h_0,x_0))\Psi_{dd}(h_0,y_0-x_0)-2\lambda\Psi_{dd}(h_0,y_0-x_0)\le0.
	\end{align*}
	\fi
	
	Write
	\begin {align}
	F(h,x,y)
	=\begin{cases}
		z(s+h,y)-z(s+h,x)-\Psi(h,y-x), &h>0, x\le y,\\
		z(s,y)-z(s,x)-2K,&h=0, x<y,\\
		0,&h=0, x=y.
	\end{cases}
	\end {align}
	It is easy to check that $F$ is continuous on $D\setminus C$, and upper semicontinuous on $D$, where $D=[0, \infty)\times \{(x,y) \mid x\le y\}$, and $C=\{(0, x,x)\mid x\in \mathbb{R}\}$.
	
	Now, we check the boundary at \(h=0\) or \(x=y\). For \(h>0\), on the diagonal,
	\[
	F(h,x,x)=z(s+h,x)-z(s+h,x)-\Psi(h,0)
	=0.
	\]
	At \(h=0\), for \(x<y\), $F(0,x,y)\le 0$ because
	\[
	z(s,y)-z(s,x)\le2K,
	\] and \(F(0,x,x)\le 0\) by the definition.
	
	On the whole space $D=\{h\ge0\}\times\{x\le y\}$, \(F\) need not attain its supremum. We must first add a penalty that forces a maximum to exist.
	
	Let
	\[
	p(x)=\sqrt{1+x^2}.
	\]
	Then
	\[
	p(x)\longrightarrow\infty\quad(|x|\to\infty),
	\qquad
	0<p''(x)=\frac1{(1+x^2)^{3/2}}\le1.
	\]
	For \(\eta,\delta>0\), define
	\begin {align}
	F_{\eta,\delta}(h,x,y)
	=F(h,x,y)-\eta\bigl(p(x)+p(y)\bigr)
	-(2\Lambda\eta+\delta)h.
	\end {align}
	Fix $H>0.$ Because \(z\) is bounded on \([s,s+H]\times\mathbb R\) and \(\Psi\ge0\),
	\[
	F_{\eta,\delta}(h,x,y)\longrightarrow-\infty
	\quad\text{as }|x|+|y|\longrightarrow\infty,
	\]
	uniformly in \(0<h\le H\). Thus any positive supremum must occur in a bounded spatial region.

	Consequently, if \(F_{\eta,\delta}\) is positive anywhere, upper semicontinuity and spatial localization imply that it attains a positive maximum at some
	\[
	(h_0,x_0,y_0),\qquad 0<h_0\le H,\quad x_0<y_0.
	\]
	
	We shall derive a contradiction at that maximum.
	
	Write
	\[
	a_x=a(s+h_0,x_0),\qquad a_y=a(s+h_0,y_0),
	\]
	and evaluate \(\Psi_{dd}\) at \((h_0,y_0-x_0)\).
	
	The spatial second derivative tests give
	\[
	\begin{aligned}
		0&\ge(F_{\eta,\delta})_{xx}
		=-z_{xx}(s+h_0,x_0)-\Psi_{dd}-\eta p''(x_0),\\
		0&\ge(F_{\eta,\delta})_{yy}
		=z_{xx}(s+h_0,y_0)-\Psi_{dd}-\eta p''(y_0).
	\end{aligned}
	\]
	Hence
	\[
	z_{xx}(s+h_0,x_0)\ge-\Psi_{dd}-\eta p''(x_0),
	\]
	\[
	z_{xx}(s+h_0,y_0)\le\Psi_{dd}+\eta p''(y_0).
	\]
	
	Using \(z_t=a z_{xx}\), we obtain
	
	\begin{align}\label{Fh}
		\begin {split}
		(F_{\eta,\delta})_h
		&=a_yz_{xx}(s+h_0,y_0)-a_xz_{xx}(s+h_0,x_0)
		-\Psi_h-2\Lambda\eta-\delta\\
		&\le(a_x+a_y)\Psi_{dd}-\Psi_h\\
		&\quad+\eta\bigl(a_xp''(x_0)+a_yp''(y_0)\bigr)
		-2\Lambda\eta-\delta\\
		&\le-\delta<0.
		\end {split}
	\end{align}
	Here we used
	\[
	a_x+a_y\ge2\lambda,\quad \Psi_{dd}\le0,\quad
	\Psi_h=2\lambda\Psi_{dd},
	\]
	and \(a_x,a_y\le\Lambda\), \(p''\le1\).
	
	But at a maximum \(0< h_0\le H\),
	\(
	(F_{\eta,\delta})_h\ge0.
	\)
	This contradicts (\ref{Fh}). Therefore
	\[
	F_{\eta,\delta}\le0.
	\]
	
	For each fixed \(h>0\) and \(x<y\), we now have
	\[
	z(s+h,y)-z(s+h,x)
	\le\Psi(h,y-x)+\eta(p(x)+p(y))
	+(2\Lambda\eta+\delta)h.
	\]
	Letting \(\eta,\delta\downarrow0\) gives
	\[
	z(s+h,y)-z(s+h,x)\le\Psi(h,y-x).
	\]
	Applying the same argument to \(-z\),
	\[
	|z(s+h,y)-z(s+h,x)|\le\Psi(h,y-x).
	\]
	Finally, divide by \(y-x\) and let \(y\downarrow x\):
	\[
	|z_x(s+h,x)|
	\le\Psi_d(h,0)
	=\frac{2K}{\sqrt{2\pi\lambda h}}.
	\]

\end{proof}

\begin{lemma}\label{lem:time}
	Under the hypotheses of Lemma \ref{lem:smoothing}, if \(z(s,\cdot)\)
	is \(K_1\)-Lipschitz, then
	\[
	|z(s+h,x)-z(s,x)|\le K_1\sqrt{2\Lambda h}.
	\]
\end{lemma}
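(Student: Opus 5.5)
Fix \(x_0\in\R\) and \(h>0\). The plan is to trap \(z\) between two explicit classical barriers that agree with \(z(s,\cdot)\) at \(x_0\) and then compare them with \(z\) by the maximum principle. Only the bounds \(0<a\le\Lambda\) will be used, and no derivatives of \(a\).

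Since \(z(s,\cdot)\) is \(K_1\)-Lipschitz, \(z(s,x)\le z(s,x_0)+K_1|x-x_0|\). The kink at \(x_0\) is a problem, so I will replace \(|x-x_0|\) by the smooth convex majorant
\[
\rho_\varepsilon(x)=\sqrt{\varepsilon^2+(x-x_0)^2}\ \ge |x-x_0|,
\]
for which \(0<\rho_\varepsilon''\le 1/\varepsilon\). Consider
\[
w(t,x)=z(s,x_0)+K_1\Bigl(\frac{\rho_\varepsilon(x)^2}{2\ell}+\frac{\ell}{2}\Bigr)+K_1\frac{\Lambda}{\ell}(t-s),
\]
with a parameter \(\ell>0\). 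Here \(\rho_\varepsilon^2/(2\ell)+\ell/2\ge\rho_\varepsilon\ge|x-x_0|\) by AM--GM, so \(w(s,\cdot)\ge z(s,\cdot)\). Moreover \(w_{xx}=K_1/\ell>0\), which gives \(w_t=K_1\Lambda/\ell\ge a\,w_{xx}\). Thus \(w\) is a classical supersolution of the linear equation, and it grows quadratically in \(x\).

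To compare \(z\le w\) on \([s,s+h]\times\R\), I will run the same penalized maximum argument as in Lemma \ref{lem:smoothing}. Look at \(z-w-\eta p(x)-(\Lambda\eta+\delta)(t-s)\) with \(p(x)=\sqrt{1+x^2}\). Boundedness of \(z\) localizes any positive maximum. At an interior maximum, using \(p''\le1\) and \(a\le\Lambda\), the time derivative is \(\le-\delta<0\), which is a contradiction. At \(t=s\) the expression is \(\le0\). Letting \(\eta,\delta\downarrow0\) gives \(z\le w\). Upper semicontinuity at \(t=s\) is fine because \(z\in C_b([s,\infty)\times\R)\).

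Next evaluate at \(x=x_0\), where \(\rho_\varepsilon=\varepsilon\):
\[
z(s+h,x_0)-z(s,x_0)\le K_1\Bigl(\frac{\varepsilon^2}{2\ell}+\frac{\ell}{2}+\frac{\Lambda h}{\ell}\Bigr).
\]
Let \(\varepsilon\downarrow0\) and choose \(\ell=\sqrt{2\Lambda h}\). This gives \(\ell/2+\Lambda h/\ell=\sqrt{2\Lambda h}\), so the bound is \(K_1\sqrt{2\Lambda h}\). Applying the same argument to \(-z\), which solves the same equation and is also \(K_1\)-Lipschitz at time \(s\), gives the lower bound.

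I expect the main obstacle to be the comparison with the unbounded barrier \(w\). The smoothing with \(\varepsilon\) is in fact unnecessary, since \((x-x_0)^2/(2\ell)+\ell/2\ge|x-x_0|\) already holds and is smooth, so one can take \(\varepsilon=0\) directly. The remaining concern is that \(w\) grows, but this only helps the localization: \(z-w\to-\infty\) as \(|x|\to\infty\). The penalization therefore goes through exactly as in Lemma \ref{lem:smoothing}.
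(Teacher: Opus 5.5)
Your proof is correct and uses the same barrier-and-comparison strategy as the paper: a convex supersolution dominating the cone \(z(s,x_0)+K_1|x-x_0|\) is compared with \(z\) using only \(a\le\Lambda\), and the lower bound follows by applying the same argument to \(-z\). The only difference is the barrier. The paper takes the \(\Lambda\)-heat evolution \(z(s,x_0)+K_1\mathbb E|x-x_0+\sqrt{2\Lambda}\,W_h|\) of the cone, which in fact gives the slightly sharper constant \(\sqrt{2/\pi}\,K_1\sqrt{2\Lambda h}\); your quadratic majorant optimized over \(\ell\) gives exactly \(K_1\sqrt{2\Lambda h}\), is smooth up to \(t=s\), and you carry out the comparison against an unbounded barrier explicitly, where the paper only invokes it.
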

\begin{proof}
	Fix \(x_0\). The function
	\[
	B(h,x)=z(s,x_0)+K_1\mathbb E|x-x_0+\sqrt{2\Lambda}\,W_h|
	\]
	is convex in \(x\), solves \(B_h=\Lambda B_{xx}\), and dominates
	\(z(s,x)\) initially, where $W$ is a standard Brownian motion under $\mathbb{E}$. Thus \(B_h-aB_{xx}\ge0\).
	Comparison gives \(z(s+h,x_0)\le B(h,x_0)\). Therefore,
	\[z(s+h,x_0)-z(s,x_0)\le B(h,x_0)-z(s,x_0)\le K_1\mathbb{E}[\sqrt{2\Lambda}|W_h|].\]
	On the other side,  the heat evolution $\bar{B}(h,x)$ of -\(K_1|x-x_0|+ z(s,x_0)\), is a subsolution. Comparison gives
	
	\[z(s+h,x_0)-z(s,x_0)\ge \bar{B}(h,x_0)-z(s,x_0)\ge- K_1\mathbb{E}[\sqrt{2\Lambda}|W_h|].\]
	
\end{proof}

\subsection{Uniform estimates for smooth approximations}
Let \(\rho\) be a nonnegative smooth mollifier supported in \([-1,1]\),
with integral one, and let \(\rho_\varepsilon(r)=
\varepsilon^{-1}\rho(r/\varepsilon)\). Set
\[
\phi_\varepsilon=\phi*\rho_\varepsilon,\qquad
G_\varepsilon(r)=(G*\rho_\varepsilon)(r)-(G*\rho_\varepsilon)(0).
\]
Then \(G_\varepsilon\) is smooth and convex, and
\begin{equation}\label{eq:approx}
	G_\varepsilon(0)=0,\quad
	\alpha/2\le G_\varepsilon'\le\beta/2,\quad
	\norm{G_\varepsilon-G}_\infty\le\beta\varepsilon,\quad
	\norm{\phi_\varepsilon-\phi}_\infty\le L\varepsilon,
\end{equation}
where \(L=\Lip(\phi)\).

Fix \(\varepsilon>0\). We construct a classical solution of
\begin{equation}\label{appro_equ}
 u^\varepsilon_t-G_\varepsilon(u^\varepsilon_{xx})=0,
 \qquad u^\varepsilon(0,x)=\phi_\varepsilon(x).
\end{equation}
Since \(G_\varepsilon'\ge\alpha/2>0\), the function
\(G_\varepsilon\) is a smooth bijection of \(\mathbb R\). Define
\[
 A_\varepsilon(r)
 =G_\varepsilon'\bigl(G_\varepsilon^{-1}(r)\bigr).
\]
Then \(A_\varepsilon\in C^\infty(\mathbb R)\) and
\(\alpha/2\le A_\varepsilon\le\beta/2\). Consider the scalar
uniformly parabolic quasilinear Cauchy problem
\begin{equation}\label{quasi_equ}
 w^\varepsilon_t=A_\varepsilon(w^\varepsilon)w^\varepsilon_{xx},
 \qquad
 w^\varepsilon(0,x)=G_\varepsilon(\phi_\varepsilon''(x)).
\end{equation}
The initial function belongs to \(C_b^\infty(\mathbb R)\), where
\(C_b^\infty\) denotes smooth functions whose derivatives of every
order, including order zero, are bounded.

Classical scalar quasilinear parabolic theory
\cite{LSU1968} gives a global bounded classical solution
\(w^\varepsilon\), smooth for positive times. In particular,
\(w^\varepsilon\) is continuous up to \(t=0\), and the smooth initial
value supplies continuity of \(w^\varepsilon_t\),
\(w^\varepsilon_x\), and \(w^\varepsilon_{xx}\) up to \(t=0\)
on compact spatial sets and finite time intervals.
For completeness, the maximum principle bounds the range of
\(w^\varepsilon\) by the range of its initial value:
\[
 \|w^\varepsilon(t,\cdot)\|_\infty
 \le \|G_\varepsilon(\phi_\varepsilon'')\|_\infty.
\]
On this bounded range, all derivatives of \(A_\varepsilon\) are
bounded. Interior parabolic H\"older estimates, followed by linear
Schauder estimates and differentiation, provide the regularity needed
for continuation, excluding a finite maximal existence time.
These preliminary regularity bounds may depend on \(\varepsilon\).

Define
\[
 u^\varepsilon(t,x)
 =\phi_\varepsilon(x)+\int_0^t w^\varepsilon(r,x)\,dr.
\]
The chain rule and \eqref{quasi_equ} give
\[
 \partial_tG_\varepsilon^{-1}(w^\varepsilon)
 =\frac{w^\varepsilon_t}
 {G_\varepsilon'(G_\varepsilon^{-1}(w^\varepsilon))}
 =w^\varepsilon_{xx}.
\]
The initial-time regularity just stated permits differentiation under
the integral locally in \(x\). Thus
\begin{align*}
 u^\varepsilon_{xx}(t,x)
 &=\phi_\varepsilon''(x)+\int_0^t w^\varepsilon_{xx}(r,x)\,dr\\
 &=\phi_\varepsilon''(x)
   +G_\varepsilon^{-1}(w^\varepsilon(t,x))
   -G_\varepsilon^{-1}(w^\varepsilon(0,x))\\
 &=G_\varepsilon^{-1}(w^\varepsilon(t,x)).
\end{align*}
Consequently, \(u^\varepsilon_t=w^\varepsilon
=G_\varepsilon(u^\varepsilon_{xx})\), so \(u^\varepsilon\) solves
\eqref{appro_equ}. It is bounded on each finite time strip by its
construction. Comparison with constants and spatial translates then
implies
\[
 \|u^\varepsilon(t,\cdot)\|_\infty\le\|\phi_\varepsilon\|_\infty,
 \qquad
 \operatorname{Lip}(u^\varepsilon(t,\cdot))
 \le\operatorname{Lip}(\phi_\varepsilon)\le L.
\]
In particular, \(u^\varepsilon\) and \(u^\varepsilon_x\) are bounded
and continuous up to the initial time.

Set \(v^\varepsilon=u^\varepsilon_x\) and
\(a_\varepsilon=G_\varepsilon'(u^\varepsilon_{xx})
=A_\varepsilon(w^\varepsilon)\). The reconstruction identity yields
\[
 u^\varepsilon_{xxx}=\frac{w^\varepsilon_x}{a_\varepsilon},
 \qquad u^\varepsilon_{xt}=w^\varepsilon_x.
\]
Therefore
\begin{equation}\label{linear_equ}
 v^\varepsilon_t=a_\varepsilon v^\varepsilon_{xx},
 \qquad v^\varepsilon(0,x)=\phi_\varepsilon'(x),
\end{equation}
and \(w^\varepsilon_t=a_\varepsilon w^\varepsilon_{xx}\), with
\(\alpha/2\le a_\varepsilon\le\beta/2\).
Both \(v^\varepsilon\) and \(w^\varepsilon\) are bounded classical
solutions, continuous up to their initial time, so Lemma
\ref{lem:smoothing} applies to them. No preliminary global bounds on
their higher derivatives are needed for that lemma.
The estimates in Proposition \ref{prop:smooth} below are independent
of \(\varepsilon\).

\begin{proposition}\label{prop:smooth}
	For \(t>0\), the smooth solutions satisfy
	\begin{align}
		\norm{u^\varepsilon_x(t)}_\infty&\le L,\label{eq:ux}\\
		\norm{u^\varepsilon_{xx}(t)}_\infty
		&\le\frac{2L}{\sqrt{\pi\alpha t}},\label{eq:qbound}\\
		\norm{u^\varepsilon_t(t)}_\infty
		&\le\frac{\beta L}{\sqrt{\pi\alpha t}},\label{eq:wbound}\\
		[u^\varepsilon_t(t)]_{\mathrm{Lip}}
		&\le\frac{4\beta L}{\pi\alpha t},\label{eq:wlip}\\
		[u^\varepsilon_{xx}(t)]_{\mathrm{Lip}}
		&\le\frac{8\beta L}{\pi\alpha^2t}.\label{eq:qlip}
	\end{align}
	For \(t,s\ge\tau>0\),
	\begin{align}
		|u^\varepsilon_t(t,x)-u^\varepsilon_t(s,x)|
		&\le\frac{4\beta^{3/2}L}{\pi\alpha\tau}|t-s|^{1/2},
		\label{eq:wtime}\\
		|u^\varepsilon_{xx}(t,x)-u^\varepsilon_{xx}(s,x)|
		&\le\frac{8\beta^{3/2}L}{\pi\alpha^2\tau}|t-s|^{1/2}.
		\label{eq:qtime}
	\end{align}
\end{proposition}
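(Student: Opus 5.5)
The plan is to apply Lemma \ref{lem:smoothing} twice, in a semigroup fashion, using the two linear equations already derived: \(v^\varepsilon=u^\varepsilon_x\) and \(w^\varepsilon=u^\varepsilon_t\) both solve \(z_t=a_\varepsilon z_{xx}\) with \(\alpha/2\le a_\varepsilon\le\beta/2\). Hence Lemma \ref{lem:smoothing} applies with \(\lambda=\alpha/2\) and \(\Lambda=\beta/2\).

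Bound \eqref{eq:ux} is already established by comparison with spatial translates. For \eqref{eq:qbound}, apply Lemma \ref{lem:smoothing} to \(v^\varepsilon\) on \([0,t]\):
\[
\norm{u^\varepsilon_{xx}(t)}_\infty=\norm{v^\varepsilon_x(t)}_\infty\le \frac{2L}{\sqrt{2\pi(\alpha/2)t}}=\frac{2L}{\sqrt{\pi\alpha t}}.
\]
Since \(G_\varepsilon(0)=0\) and \(G_\varepsilon'\le\beta/2\), we get \(|u^\varepsilon_t|=|G_\varepsilon(u^\varepsilon_{xx})|\le\frac\beta2|u^\varepsilon_{xx}|\), which gives \eqref{eq:wbound}.

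For \eqref{eq:wlip}, apply Lemma \ref{lem:smoothing} to \(w^\varepsilon\) starting at \(s=t/2\). This gives \(\norm{w^\varepsilon_x(t)}_\infty\le 2\norm{w^\varepsilon(t/2)}_\infty/\sqrt{\pi\alpha t/2}\). Inserting \eqref{eq:wbound} at time \(t/2\) yields \(2\cdot\frac{\beta L}{\sqrt{\pi\alpha t/2}}\cdot\frac{1}{\sqrt{\pi\alpha t/2}}=\frac{4\beta L}{\pi\alpha t}\). Then \eqref{eq:qlip} follows from the reconstruction identity \(u^\varepsilon_{xxx}=w^\varepsilon_x/a_\varepsilon\) with \(a_\varepsilon\ge\alpha/2\), which gives the factor \(2/\alpha\), i.e. \(8\beta L/(\pi\alpha^2t)\). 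Equivalently, \(u^\varepsilon_{xx}=G_\varepsilon^{-1}(w^\varepsilon)\) and \(G_\varepsilon^{-1}\) is \(2/\alpha\)-Lipschitz.

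For the time estimates, take \(\tau\le s<t\) and apply Lemma \ref{lem:time} to \(w^\varepsilon\) from initial time \(s\). By \eqref{eq:wlip} at time \(s\ge\tau\), the function \(w^\varepsilon(s,\cdot)\) is \(K_1=4\beta L/(\pi\alpha\tau)\)-Lipschitz. Lemma \ref{lem:time} with \(\Lambda=\beta/2\) then gives \(|w^\varepsilon(t,x)-w^\varepsilon(s,x)|\le K_1\sqrt{\beta(t-s)}\), which is \eqref{eq:wtime}. Applying again the \(2/\alpha\)-Lipschitz map \(G_\varepsilon^{-1}\) transfers this to \(u^\varepsilon_{xx}\) and gives \eqref{eq:qtime}.

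The main obstacle is checking that the hypotheses of the lemmas hold on the shifted intervals. Lemma \ref{lem:smoothing} requires a bounded classical solution that is continuous at the initial time. For \(v^\varepsilon\) at time \(0\), this is supplied by the stated continuity of \(u^\varepsilon_x\) up to \(t=0\). For \(w^\varepsilon\) started at \(s=t/2>0\) or \(s\ge\tau\), interior smoothness suffices. Boundedness of \(w^\varepsilon\) follows from the maximum principle bound stated earlier. Lemma \ref{lem:time} additionally needs a comparison principle for the linear equation with bounded coefficient \(a_\varepsilon\) against the barriers \(B\) and \(\bar B\), which grow linearly in \(x\). I would handle this by the same penalization device used in Lemma \ref{lem:smoothing}, since \(w^\varepsilon\) is bounded.
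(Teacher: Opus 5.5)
Your proposal is correct and follows the paper's proof essentially step for step. You apply Lemma \ref{lem:smoothing} to \(v^\varepsilon\) from time \(0\) and to \(w^\varepsilon\) from time \(t/2\), use \(|G_\varepsilon(r)|\le\frac\beta2|r|\) and the \(2/\alpha\)-Lipschitz inverse \(G_\varepsilon^{-1}\), and apply Lemma \ref{lem:time} with \(\Lambda=\beta/2\) for the time estimates; your constants all check out, and your remarks on the lemmas' hypotheses only make explicit what the paper takes for granted.
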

\begin{proof}
	Translation invariance and comparison preserve the initial Lipschitz
	constant, proving \eqref{eq:ux}. Put
	\[
	v=u^\varepsilon_x,\qquad w=u^\varepsilon_t,\qquad
	a=G_\varepsilon'(u^\varepsilon_{xx}).
	\]
	Differentiation gives
	\[
	v_t=a v_{xx},\qquad w_t=a w_{xx},\qquad
	\alpha/2\le a\le\beta/2.
	\]
	Apply Lemma \ref{lem:smoothing} to \(v\), starting at time zero,
	to obtain \eqref{eq:qbound}. Since
	\(|G_\varepsilon(r)|\le(\beta/2)|r|\), this gives
	\eqref{eq:wbound}. Apply that lemma to \(w\) between times \(t/2\)
	and \(t\):
	\[
	\norm{w_x(t)}_\infty
	\le\frac{2\norm{w(t/2)}_\infty}{\sqrt{\pi\alpha t/2}}
	\le\frac{4\beta L}{\pi\alpha t}.
	\]
	The inverse of \(G_\varepsilon\) is \(2/\alpha\)-Lipschitz and
	\(u^\varepsilon_{xx}=G_\varepsilon^{-1}(w)\); hence \eqref{eq:qlip}.
	For \(t\ge s\ge\tau\), apply Lemma \ref{lem:time} to \(w\) starting
	at \(s\), using \eqref{eq:wlip} and \(\Lambda=\beta/2\).
	This proves \eqref{eq:wtime}; the inverse Lipschitz bound proves
	\eqref{eq:qtime}.
\end{proof}

\subsection{Passage to the viscosity solution}
\begin{theorem}[Endpoint regularity]\label{thm:regularity}
	For bounded Lipschitz initial data, the solution of \eqref{eq:pde}
	is classical on \((0,\infty)\times\R\). All the estimates in
	Proposition \ref{prop:smooth} hold with \(u^\varepsilon\) replaced by
	\(u\). In particular,
	\[
	[u_{xx}(t,\cdot)]_{\mathrm{Lip}}\le A_G L/t,
	\]where \[
	A_G=\frac{8\beta}{\pi\alpha^2}.  
	\]
	Both \(u_t\) and \(u_{xx}\) are locally Lipschitz in space and locally
	\(1/2\)-H\"older in time, uniformly on positive-time strips.
\end{theorem}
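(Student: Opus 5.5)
The plan is to pass to the limit \(\varepsilon\downarrow0\) in the smooth solutions \(u^\varepsilon\) of \eqref{appro_equ}, using compactness supplied by the \(\varepsilon\)-independent bounds of Proposition \ref{prop:smooth}. The argument has three steps.

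\textbf{Step 1: uniform convergence.} We first show that \(u^\varepsilon\to u\) uniformly on \([0,T]\times\R\). By \eqref{eq:approx}, \(\norm{G_\varepsilon-G}_\infty\le\beta\varepsilon\) and \(\norm{\phi_\varepsilon-\phi}_\infty\le L\varepsilon\). Hence
\[
\tilde u^\varepsilon=u^\varepsilon+L\varepsilon+\beta\varepsilon t
\]
is a classical supersolution of \eqref{eq:pde}, because
\[
\tilde u^\varepsilon_t-G(\tilde u^\varepsilon_{xx})=G_\varepsilon(u^\varepsilon_{xx})-G(u^\varepsilon_{xx})+\beta\varepsilon\ge0 .
\]
Symmetrically, \(u^\varepsilon-L\varepsilon-\beta\varepsilon t\) is a subsolution. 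The comparison principle for bounded viscosity solutions of \eqref{eq:pde} then gives
\[
\sup_{x}|u^\varepsilon(t,x)-u(t,x)|\le L\varepsilon+\beta\varepsilon t .
\]

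\textbf{Step 2: compactness of derivatives.} Fix \(\tau>0\). On \([\tau,T]\times\R\), Proposition \ref{prop:smooth} gives the following, uniformly in \(\varepsilon\):
\begin{itemize}
\item \(u^\varepsilon_{xx}\) and \(u^\varepsilon_t\) are bounded by \(C/\sqrt\tau\);
\item they are Lipschitz in \(x\) with constants \(8\beta L/(\pi\alpha^2 t)\) and \(4\beta L/(\pi\alpha t)\);
\item they are \(1/2\)-H\"older in \(t\) with constants \(C/\tau\), by \eqref{eq:wtime} and \eqref{eq:qtime}.
\end{itemize}
Since also \(|u^\varepsilon_x|\le L\), the interpolation \(\norm{f'}_\infty\le C\norm f_\infty^{1/2}\norm{f''}_\infty^{1/2}\) applied to \(u^\varepsilon-u^{\varepsilon'}\) turns uniform convergence of \(u^\varepsilon\) into uniform convergence of \(u^\varepsilon_x\). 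Arzel\`a--Ascoli on compact sets, followed by a diagonal argument over \(\tau=1/k\) and \(T=k\), extracts a subsequence along which \(u^\varepsilon_{xx}\to q\) and \(u^\varepsilon_t\to w\) locally uniformly on \((0,\infty)\times\R\).

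Passing to the limit in the integrated identities
\[
u^\varepsilon(t,x)-u^\varepsilon(s,x)=\int_s^t u^\varepsilon_t\,dr,
\qquad
u^\varepsilon_x(t,y)-u^\varepsilon_x(t,x)=\int_x^y u^\varepsilon_{xx}\,dz
\]
shows that \(u\in C^{1,2}\) on \((0,\infty)\times\R\) with \(u_t=w\) and \(u_{xx}=q\). Moreover, \(G_\varepsilon(u^\varepsilon_{xx})\to G(q)\) locally uniformly, because \(G_\varepsilon\to G\) uniformly and \(G\) is Lipschitz. Therefore \(u_t=G(u_{xx})\) holds classically. Since the limit \(u\) is already identified by Step 1, the whole family converges and no subsequence dependence remains.

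\textbf{Step 3: transfer of the estimates.} Every inequality in Proposition \ref{prop:smooth} is a pointwise bound on differences or values of \(u^\varepsilon_x\), \(u^\varepsilon_{xx}\) and \(u^\varepsilon_t\) at finitely many points. Each is therefore preserved under pointwise convergence, and letting \(\varepsilon\downarrow0\) gives the same bounds for \(u\). In particular we obtain \([u_{xx}(t)]_{\mathrm{Lip}}\le A_GL/t\), together with the stated local Lipschitz and \(1/2\)-H\"older regularity, uniform for \(t\ge\tau\).

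\textbf{Main obstacle.} The delicate points are the comparison principle used in Step 1 and the convergence of \(u^\varepsilon_x\). Comparison between a classical supersolution and a bounded viscosity solution is standard, since it is exactly the uniqueness class assumed in defining \(P^G_t\). The convergence of \(u^\varepsilon_x\) is handled by the interpolation inequality above, or alternatively by Arzel\`a--Ascoli using the uniform bound on \(u^\varepsilon_{xx}\) for \(t\ge\tau\).
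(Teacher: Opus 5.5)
Your proposal is correct and follows essentially the same route as the paper. Both arguments proceed through the same steps: uniform convergence $u^\varepsilon\to u$ by comparison, then Arzel\`a--Ascoli from the $\varepsilon$-independent bounds of Proposition \ref{prop:smooth}, then identification of the limits $u_t=w$, $u_{xx}=q$, $w=G(q)$ through integrated identities, then convergence of the whole family by uniqueness of the limit, and finally passage of the pointwise estimates to the limit. The differences are only cosmetic. You shift $u^\varepsilon$ and compare in the $G$-equation, whereas the paper shifts $u$ and compares in the $G_\varepsilon$-equation. You also get convergence of $u^\varepsilon_x$ from the Landau interpolation inequality, where the paper uses the integral Taylor formula on $[a,b]$.
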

\begin{proof}
	Write \(\eta_\varepsilon=\norm{\phi_\varepsilon-\phi}_\infty\) and
	\(\delta_\varepsilon=\norm{G_\varepsilon-G}_\infty\).
	Viscosity test functions show that
	\[
	u(t,x)+\eta_\varepsilon+t\delta_\varepsilon,\qquad
	u(t,x)-\eta_\varepsilon-t\delta_\varepsilon
	\]
	are respectively a supersolution and a subsolution for the
	\(G_\varepsilon\)-equation. Comparison therefore yields
	\begin{equation}\label{eq:uniformconv}
		\sup_{[0,T]\times\R}|u^\varepsilon-u|
		\le\eta_\varepsilon+T\delta_\varepsilon
		\le(L+\beta T)\varepsilon.
	\end{equation}
	Proposition \ref{prop:smooth} gives local uniform boundedness and
	equicontinuity of \(w^\varepsilon=u^\varepsilon_t\) and
	\(q^\varepsilon=u^\varepsilon_{xx}\) for positive times.
	Arzel\`a--Ascoli gives locally uniformly convergent subsequences with
	continuous limits \(w,q\). For every compactly supported smooth test
	function \(\psi\) in the positive-time region, integration by parts
	and \eqref{eq:uniformconv} imply
	\[
	\int w\psi=-\int u\psi_t,\qquad
	\int q\psi=\int u\psi_{xx}.
	\]
	These continuous distributional derivatives are classical derivatives.
	For clarity, the time statement follows by passing to the limit in
	\(u^\varepsilon(t,x)-u^\varepsilon(s,x)=
	\int_s^t w^\varepsilon(r,x)\,dr\), that is,
	\[u(t,x)-u(s,x)=
	\int_s^t w(r,x)\,dr.\] Thus, $u_t=w.$ For space, fix \(a<b\) and use
	\[
	u^\varepsilon_x(t,a)=
	\frac{u^\varepsilon(t,b)-u^\varepsilon(t,a)
		-\int_a^b(b-r)q^\varepsilon(t,r)\,dr}{b-a}.
	\]
Denote the limit of its right-hand side by $\tilde{u}_{x}(t,a)$, and passing to the limit we get
	\[u(t,b)-u(t,a)=\tilde{u}_{x}(t,a)(b-a)+\int_a^b(b-r)q(t,r)\,dr.\]
	Thus, the
	second-order integral Taylor formula on \([a,b]\) gives \(u_{xx}=q\),
	with \(u_x=\tilde{u}_{x}\) continuous in \((t,x)\). Thus \(u_t=w\), \(u_{xx}=q\),
	and \(w=G(q)\), by the uniform convergence of \(G_\varepsilon\).
	Every subsequential derivative limit is uniquely determined by \(u\),
	so the whole families converge locally uniformly. All the estimates
	pass to the limit.
\end{proof}

\begin{remark}[The endpoint does not imply \(C^3\)]
	The following example is a translated and rescaled version of the
	periodic eigenfunction constructed by Song
	\cite[Section 2, equation (2.5) and Proposition 2.1]{Song2015}.
	That construction already gives a bounded function with a uniformly
	Lipschitz second derivative and a separated-variable solution of the
	\(G\)-heat equation. Here we use it to illustrate the failure of
	\(C^3\) regularity across the joining points.
	Assume \(\alpha<\beta\), put \(a_0=\pi\sqrt\alpha\) and
	\(p_0=\pi(\sqrt\alpha+\sqrt\beta)\), and extend periodically the function
	\[
	H(x)=
	\begin{cases}
		\sqrt\alpha\sin(x/\sqrt\alpha),&0\le x\le a_0,\\
		-\sqrt\beta\sin((x-a_0)/\sqrt\beta),&a_0\le x\le p_0.
	\end{cases}
	\]
	The values of \(H,H',H''\) match at the joins, \(H''\) is Lipschitz,
	and \(G(H'')=-H/2\). Hence \(e^{-t/2}H(x)\) solves
	\eqref{eq:pde}. However,
	\(H'''(a_0-)=1/\alpha\ne1/\beta=H'''(a_0+)\).
	The endpoint assertion is Lipschitz continuity of \(u_{xx}\), not
	existence of a continuous third derivative.
\end{remark}

\section{Nonlinear Stein estimates}
We recall the precise ingredients of \cite[Lemmas 4.1 and 4.3]{Song2020}.
For a bounded test \(F\), set
\(\Theta_F=\{\mu\in\Theta:\mu[F]=N[F]\}\).
Finite third moment implies the uniform tail condition required in
that paper, since \(N[|x|\mathbf1_{\{|x|>R\}}]\le m_3/R^2\).

The lemma below is the endpoint (Holder exponent equal to 1) case of \cite[Lemma 4.3]{Song2020}.

\begin{lemma}[Stein defect]\label{lem:defect}
	If \(F\) is in \(C^2_b(\mathbb{R})\), with Lipschitz
	second derivative, then, for \(\mu\in\Theta_F\),
	\[
	\left|\mu\left[\tfrac{x}{2}F'(x)-G(F''(x))\right]\right|
	\le2[F'']_{\mathrm{Lip}}m_3.
	\]
\end{lemma}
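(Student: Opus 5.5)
The plan is to reproduce the proof of \cite[Lemma 4.3]{Song2020} with the H\"older modulus of \(F''\) replaced by its Lipschitz constant. The main input is the first-order characterization of \(\Theta_F\) in \cite[Lemma 4.1]{Song2020}, which I take as given. Two preliminary facts hold for every \(\nu\in\Theta\). First, \(\nu[x]=0\), since \(\E[X_1]=\E[-X_1]=0\) forces \(\sup_\nu\nu[x]=0=\inf_\nu\nu[x]\). Second, \(\sigma_\nu^2:=\nu[x^2]\in[\alpha,\beta]\) and \(\nu[|x|^3]\le m_3\).

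\emph{Step 1 (linear Stein identity for a single measure).} For a fixed mean-zero \(\nu\) with variance \(\sigma_\nu^2>0\), let \(\nu^*\) be its zero-bias transform, so that \(\nu[x f(x)]=\sigma_\nu^2\,\nu^*[f']\) for Lipschitz \(f\). Applying this with \(f=F'/2\) gives
\[
\nu\bigl[\tfrac x2F'(x)\bigr]=\tfrac{\sigma_\nu^2}{2}\,\nu^*[F''].
\]
The standard Wasserstein bound is \(W_1(\nu,\nu^*)\le \nu[|x|^3]/(2\sigma_\nu^2)\); with a crude constant, \(\tfrac32 m_3/\sigma_\nu^2\) also suffices. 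It yields
\[
\Bigl|\nu\bigl[\tfrac x2F'\bigr]-\tfrac{\sigma_\nu^2}{2}\nu[F'']\Bigr|\le \tfrac{\sigma_\nu^2}{2}[F'']_{\Lip}W_1(\nu,\nu^*)\le [F'']_{\Lip}m_3 .
\]
Only the Lipschitz constant of \(F''\) enters here, and this is exactly where the endpoint exponent \(1\) replaces the exponent \(\gamma\) of \cite{Song2020}.

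\emph{Step 2 (from \(\sigma_\mu^2\mu[F'']/2\) to \(\mu[G(F'')]\) using \(\mu\in\Theta_F\)).} Pointwise, \(\tfrac{\sigma^2}{2}r\le G(r)\) for every \(\sigma^2\in[\alpha,\beta]\). Hence one inequality is immediate: \(\tfrac{\sigma_\mu^2}{2}\mu[F'']\le \mu[G(F'')]\). The reverse inequality must come from maximality of \(\mu\). Following \cite[Lemma 4.1]{Song2020}, I will compare \(\mu[F]=N[F]\) with the value of \(F\) under perturbed laws built from \(\Theta\); for instance, mix the law of \(x\) under \(\mu\) with a small independent increment distributed according to some \(\nu\in\Theta\). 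To second order, these perturbed values produce \(\tfrac{\sigma_\nu^2}{2}\) weighted against \(F''\) in a \(\mu\)-dependent way. The first-order optimality of \(\mu\) then says that \(\mu\) already realizes the worst-case choice of variance against \(F''\), that is, \(G\) applied inside the integral. The third-order remainder of this Taylor expansion is again controlled by \([F'']_{\Lip}\) times a third absolute moment, which contributes at most another \([F'']_{\Lip}m_3\).

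\emph{Step 3.} Adding the two error terms gives the constant \(2[F'']_{\Lip}m_3\), which is the claimed bound.

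The main obstacle is Step 2. A single variance \(\sigma_\mu^2\) cannot reproduce the nonlinear quantity \(\mu[G(F'')]\) pointwise when \(F''\) changes sign. The argument must therefore use the full family \(\Theta\), not just \(\mu\), through the directional independence structure encoded in \cite[Lemma 4.1]{Song2020}. I would first try to reproduce that lemma's perturbation argument verbatim. I would then check carefully that its remainder terms involve only \(\sup|F''(x)-F''(y)|/|x-y|\) and \(m_3\), and not the H\"older exponent \(\gamma\) or a tail truncation level. The remark that \(N[|x|\mathbf 1_{\{|x|>R\}}]\le m_3/R^2\) suggests that truncation can be avoided entirely once the third moment is finite.
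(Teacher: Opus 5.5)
Your Step 1 is fine (it even gives $\tfrac14[F'']_{\mathrm{Lip}}m_3$). The easy half of Step 2, $\tfrac{\sigma_\mu^2}{2}\mu[F'']\le\mu[G(F'')]$, is also correct. The gap is the reverse half, and that is the whole content of the lemma. The perturbation argument you sketch cannot work. $\mu\in\Theta_F$ is maximal only against competitors in $\Theta$, and $\Theta$ is merely the representing set of the one-variable law of $X_1$. Laws obtained by mixing or convolving $\mu$ with a small increment drawn from some $\nu$ are not in $\Theta$, so maximality gives no first-order condition against them. The directional independence of the $X_i$ is not encoded in $\Theta$ at all. Nor is \cite[Lemma 4.1]{Song2020} a characterization of $\Theta_F$; in this paper it is the interpolating identity of Lemma~\ref{lem:identity}. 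As a result, the second error term in your Step 3 is asserted rather than derived, and the moment inequality it actually requires never appears.

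The missing idea is that the sign changes of $F''$ only cost an error of the allowed size. So freeze $F''$ at the common mean $0$ instead of trying to make $\mu$ reproduce $G$ pointwise.

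\emph{Maximality step.} Since $\nu[x]=0$ and $|F(x)-F(0)-F'(0)x-\tfrac12F''(0)x^2|\le\tfrac16[F'']_{\mathrm{Lip}}|x|^3$, each $\nu\in\Theta$ satisfies $\nu[F]=F(0)+\tfrac{\sigma_\nu^2}{2}F''(0)$ up to $\tfrac16[F'']_{\mathrm{Lip}}m_3$. Compare $\mu[F]\ge\nu_k[F]$ with $\nu_k\in\Theta$ whose variances tend to $\beta$ (if $F''(0)\ge0$) or to $\alpha$ (if $F''(0)<0$). This gives $0\le G(F''(0))-\tfrac{\sigma_\mu^2}{2}F''(0)\le\tfrac13[F'']_{\mathrm{Lip}}m_3$, and it is the only place maximality is used.

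\emph{Remaining terms.} Expanding $F'$ at $0$ gives $|\mu[\tfrac x2F']-\tfrac{\sigma_\mu^2}{2}F''(0)|\le\tfrac14[F'']_{\mathrm{Lip}}m_3$. Since $G$ is $\tfrac\beta2$-Lipschitz, $|\mu[G(F'')]-G(F''(0))|\le\tfrac\beta2[F'']_{\mathrm{Lip}}\mu[|x|]\le\tfrac12[F'']_{\mathrm{Lip}}m_3$, using $\beta\le m_3^{2/3}$ and $\mu[|x|]\le m_3^{1/3}$.

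Summing gives $\tfrac{13}{12}[F'']_{\mathrm{Lip}}m_3$, within the stated bound. The paper itself gives no argument beyond citing \cite[Lemma 4.3]{Song2020} at H\"older exponent one.
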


\begin{lemma}[Interpolating identity]\label{lem:identity}
	For  \(F\in C_b^2(\mathbb{R})\) with Lipschitz
	second derivative,
	define
	\[
	F_s(y)=P_s^GF(\sqrt{1-s}\,y),\qquad
	\mathcal L_GF(y)=G(F''(y))-\tfrac y2F'(y).
	\]
	The nonlinear Stein identity of \cite[Lemma 4.1]{Song2020} is
	\[
	\NG[F]-N[F]=
	\int_0^1\frac1{1-s}
	\sup_{\mu\in\Theta_{F_s}}\mu[\mathcal L_GF_s]\,ds.
	\]
	Consequently, whenever the right-hand bound is integrable,
	\begin{equation}\label{eq:steinbound}
		|\NG[F]-N[F]|
		\le 2m_3\int_0^1\frac{[F_s'']_{\mathrm{Lip}}}{1-s}\,ds.
	\end{equation}
\end{lemma}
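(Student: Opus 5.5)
The identity itself I would take directly from \cite[Lemma 4.1]{Song2020}, so the work is to check its hypotheses here and then derive \eqref{eq:steinbound}. The plan is to bound the integrand pointwise in \(s\) using Lemma \ref{lem:defect}, and then integrate.

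\emph{Step 1 (admissibility of \(F_s\)).} For each \(s\in(0,1)\) I check that \(F_s\) satisfies the hypotheses of Lemma \ref{lem:defect}. Theorem \ref{thm:regularity}, applied with initial datum \(F\), shows that \(P_s^GF\) is bounded, Lipschitz, and \(C^2\) in space with Lipschitz second derivative. Composing with the dilation \(y\mapsto\sqrt{1-s}\,y\) preserves these properties, and gives
\[
F_s''(y)=(1-s)\,(P^G_sF)_{xx}(s,\sqrt{1-s}\,y),\qquad [F_s'']_{\mathrm{Lip}}=(1-s)^{3/2}[(P^G_sF)_{xx}]_{\mathrm{Lip}}.
\]
At \(s=0\) the same facts hold for \(F\) by assumption. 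Boundedness of \(F_s'\) and \(F_s''\) comes from \eqref{eq:ux} and \eqref{eq:qbound}. This is also what makes the moment terms \(\tfrac y2F_s'(y)\) admissible under the finite third moment hypothesis, which is the setting in which Song's identity is stated.

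\emph{Step 2 (the sup is over a nonempty set).} Since \(F_s\) is bounded and continuous and \(\Theta\) is weakly compact, the map \(\mu\mapsto\mu[F_s]\) attains its supremum. Hence \(\Theta_{F_s}\neq\emptyset\), and the integrand in the identity is well defined and finite.

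\emph{Step 3 (pointwise bound and integration).} Since \(\mathcal L_GF_s=-\bigl(\tfrac y2F_s'-G(F_s'')\bigr)\), Lemma \ref{lem:defect} gives, for every \(\mu\in\Theta_{F_s}\),
\[
\bigl|\mu[\mathcal L_GF_s]\bigr|\le 2[F_s'']_{\mathrm{Lip}}m_3 .
\]
Taking the supremum over \(\mu\in\Theta_{F_s}\) yields
\[
\Bigl|\sup_{\mu\in\Theta_{F_s}}\mu[\mathcal L_GF_s]\Bigr|\le 2m_3[F_s'']_{\mathrm{Lip}} .
\]
Inserting this into the identity and using \(|\int g|\le\int|g|\) gives \eqref{eq:steinbound}, whenever \(s\mapsto [F_s'']_{\mathrm{Lip}}/(1-s)\) is integrable on \((0,1)\).

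The only delicate point is measurability and integrability of the integrand in \(s\). The hypothesis ``whenever the right-hand bound is integrable'' is there precisely to cover this. For measurability, I expect the continuity of \(u_{xx}\) in time away from \(0\), from \eqref{eq:qtime}, to suffice. The \(t^{-1}\) singularity of \([u_{xx}]_{\mathrm{Lip}}\) near \(s=0\) is not a concern for \(F\in C^2_b\) with Lipschitz \(F''\). In that case one can instead bound \([(P^G_sF)_{xx}]_{\mathrm{Lip}}\) by a constant times \([F'']_{\mathrm{Lip}}\). Presumably this will be the main obstacle later, when \(F\) is only Lipschitz and this replacement has to be handled separately.
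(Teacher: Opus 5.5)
Your proposal is correct and follows the paper's route. The paper's proof likewise takes the identity from \cite{Song2020}, applies Lemma \ref{lem:defect} pointwise in \(s\) to the maximizing measures in \(\Theta_{F_s}\), and integrates; your added checks (the regularity of \(F_s\) via Theorem \ref{thm:regularity}, and \(\Theta_{F_s}\neq\emptyset\) by weak compactness) only make explicit what the paper leaves implicit.
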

\begin{proof}
	The identity is the result cited from \cite{Song2020}; it also holds
	with the supremum replaced by the infimum. Lemma \ref{lem:defect}
	applied to its maximizing measures gives \eqref{eq:steinbound}.
	In our application below the initial test is already smoothed by a
	strictly positive \(G\)-heat time, and the displayed integrals are finite.
\end{proof}

\section{Proof of the main convergence theorem}
\begin{proof}[Proof of Theorem \ref{thm:main}]
	Let \(L=\Lip(\phi)\), \(S_0=0\), and
	\[
	S_i=n^{-1/2}\sum_{k=1}^iX_k,\qquad
	a_i=\E[u(1-i/n,S_i)]\quad(0\le i\le n).
	\]
	The endpoints are \(a_0=\NG[\phi]\) and \(a_n=\E[\phi(W_n)]\).
	For \(1\le i\le n\), define
	\[
	b_i(x)=N[u(1-i/n,x+\cdot/\sqrt n)],\quad
	c_i(x)=\NG[u(1-i/n,x+\cdot/\sqrt n)].
	\]
	Independence gives \(a_i=\E[b_i(S_{i-1})]\), while the semigroup
	property gives \(a_{i-1}=\E[c_i(S_{i-1})]\). Sublinearity implies
	\begin{equation}\label{eq:replace}
		|a_i-a_{i-1}|\le\sup_x|b_i(x)-c_i(x)|.
	\end{equation}
	For \(i<n\), fix \(x\) and use
	\(F(y)=u(1-i/n,x+y/\sqrt n)\).
	Positive homogeneity of \(G\), translation invariance and the semigroup
	property identify its Stein interpolation as
	\[
	F_s(y)=u\left(1-\frac in+\frac sn,\,
	x+\sqrt{\frac{1-s}{n}}\,y\right).
	\]
	Thus
	\[
	[F_s'']_{\mathrm{Lip}}=
	\left(\frac{1-s}{n}\right)^{3/2}
	[u_{xx}(1-i/n+s/n,\cdot)]_{\mathrm{Lip}} .
	\]
	Lemma \ref{lem:identity} now gives
	\begin{align*}
		|b_i(x)-c_i(x)|
		&\le\frac{2m_3}{n^{3/2}}\int_0^1
		(1-s)^{1/2}[u_{xx}(1-i/n+s/n)]_{\mathrm{Lip}}\,ds\\
		&\le\frac{2m_3}{\sqrt n}
		\int_{1-i/n}^{1-(i-1)/n}[u_{xx}(t)]_{\mathrm{Lip}}\,dt .
	\end{align*}
	For \(n\ge2\), summing these bounds, using \eqref{eq:replace} and
	Theorem \ref{thm:regularity}, yields
	\[
	\sum_{i=1}^{n-1}|a_i-a_{i-1}|
	\le\frac{2m_3}{\sqrt n}\int_{1/n}^1\frac{A_GL}{t}\,dt
	=\frac{2A_GLm_3\log n}{\sqrt n}.
	\]
	For the last step, Lipschitz continuity gives directly
	\[
	|b_n(x)-\phi(x)|\le\frac L{\sqrt n}N[|x|]
	\le\frac{L\sqrt\beta}{\sqrt n}.
	\]
	Likewise
	\[
	|c_n(x)-\phi(x)|\le
	\frac L{\sqrt n}\NG[|x|]\le\frac{L\sqrt\beta}{\sqrt n},
	\]
	because both \(N[x^2]\) and \(\NG[x^2]\) equal \(\beta\).
	Consequently \(|a_n-a_{n-1}|\le2L\sqrt\beta/\sqrt n\).
	The triangle inequality proves \eqref{eq:main}.
	For \(n=1\) only this last-step estimate is needed.
\end{proof}

\section{Scope of the bound}
The endpoint estimate improves the regularity input from an unspecified
exponent \(\gamma\in(0,1)\) to spatial Lipschitz continuity.
Nevertheless, substitution into an integral extending to time zero is
not justified: the bound \(A_GL/t\) is nonintegrable there.
Separating the last replacement step produces the cutoff \(t=1/n\)
and hence \(\log n\).

\section*{Acknowledgements}

 The author is supported by National Key R\&D Program of China (No. 2024YFA1013503) and the National Natural Science Foundation of China (No. 12431017).

%%% ----------------------------------------------------------------------

%%%%%%%%%%%%%%%%%%%%%%%âÃ¢Â§âÃ­ââÂ¬â«âÃ­âÃ­âÃ¬âÃ³Ã€
\renewcommand{\refname}{\large References}{\normalsize \ }


\begin{thebibliography}{99}

\bibitem{CIL1992}
Crandall M G, Ishii H, Lions P L.
User's guide to viscosity solutions of second order partial differential
equations. \emph{Bulletin of the American Mathematical Society},
1992, 27(1): 1--67.

\bibitem{Krylov2020}
Krylov N V.
On Shige Peng's central limit theorem.
\emph{Stochastic Processes and their Applications},
2020, 130(3): 1426--1434.


\bibitem{LSU1968}
Ladyzhenskaya O A, Solonnikov V A, Ural'tseva N N.
\emph{Linear and Quasilinear Equations of Parabolic Type}.
Translations of Mathematical Monographs, Vol. 23.
American Mathematical Society, Providence, RI, 1968.

\bibitem{Peng2008}
Peng S.
A new central limit theorem under sublinear expectations.
2008, arXiv:0803.2656.

\bibitem{Song2015}
Song Y.
A note on \(G\)-normal distributions.
\emph{Statistics and Probability Letters},
2015, 106: 142--146.


\bibitem{Song2020}
Song Y.
Normal approximation by Stein's method under sublinear expectations.
\emph{Stochastic Processes and their Applications},
2020, 130(5): 2838--2850.


\bibitem{Wang1992}
Wang L.
On the regularity theory of fully nonlinear parabolic equations: II.
\emph{Communications on Pure and Applied Mathematics},
1992, 45(2): 141--178.


\end{thebibliography}
\end{document}